
\documentclass{article}
\usepackage{amsfonts}
\usepackage{amsmath}

\setcounter{MaxMatrixCols}{10}

\newtheorem{theorem}{Theorem}

\newtheorem{corollary}[theorem]{Corollary}

\newtheorem{lemma}[theorem]{Lemma}

\newenvironment{proof}[1][Proof]{\noindent\textbf{#1.} }{\ \rule{0.5em}{0.5em}}
\input{tcilatex}
\begin{document}

\title{\textbf{On the binomial sums of Horadam sequence }}
\author{\textbf{Nazmiye Yilmaz and Necati Taskara}\thanks{%
e-mail: nzyilmaz@selcuk.edu.tr and\ \ \ \ \ \ \ ntaskara@selcuk.edu.tr} \\
Department of Mathematics, Science Faculty,\\
Selcuk University, 42075, Campus, Konya, Turkey}
\maketitle

\begin{abstract}
The main purpose of this paper is to establish some new properties of
Horadam numbers in terms of binomial sums. By that, we can obtain these
special numbers in a new and direct way. Moreover, some connections between
Horadam and generalized Lucas numbers are revealed to get a more strong
result.

\textit{Keywords}: Horadam numbers, generalized Lucas numbers, binomial sums

\textit{Ams Classification: 11B39, 11B65}
\end{abstract}

\section{Introduction}

For $a,b,$ $p,q\in 
\mathbb{Z}
,$ Horadam [12] considered the sequence $W_{n}\left( {a,b\ ;\ p,q}\right) ,$
shortly $W_{n},$ which was defined by the recursive equation%
\begin{equation}
W_{n}\left( {a,b\ ;\ p,q}\right) =pW_{n-1}+qW_{n-2}~\ \ \ \ \ (n\geq 2),
\label{1}
\end{equation}%
where initial conditions are $W_{0}=a,\;W_{1}=b$ and $n\in 
\mathbb{N}
.\ $

In Equation (\ref{1}), for special choices of $a,\ b,\ p$ and $q$, the
following recurrence relations can be obtained.

\begin{itemize}
\item For $a=0,\ b=1,$ it is obtained generalized Fibonacci numbers:%
\begin{equation}
U_{n}=pU_{n-1}+qU_{n-2}.~  \label{1.1}
\end{equation}

\item For $a=2,\ b=p,$ it is obtained generalized Lucas numbers:%
\begin{equation}
V_{n}=pV_{n-1}+qV_{n-2}.  \label{1.11}
\end{equation}

\item For $a=0,\ b=1,\ p=1,\ q=1,$ it is obtained classical Fibonacci
numbers:%
\begin{equation*}
F_{n}=F_{n-1}+F_{n-2}.
\end{equation*}

\item For $a=2,\ b=1,\ p=1,\ q=1,$ it is obtained classical Lucas numbers:%
\begin{equation*}
L_{n}=L_{n-1}+L_{n-2}.
\end{equation*}

\item For $a=0,\ b=1,\ p=2,\ q=1,$ it is obtained Pell numbers:%
\begin{equation*}
P_{n}=2P_{n-1}+P_{n-2}.
\end{equation*}

\item For $a=2,\ b=2,\ p=2,\ q=1,$ it is obtained Pell-Lucas numbers:%
\begin{equation*}
Q_{n}=2Q_{n-1}+Q_{n-2}.
\end{equation*}

\item For $a=0,\ b=1,\ p=1,\ q=2,$ it is obtained Jacobsthal numbers:%
\begin{equation*}
J_{n}=J_{n-1}+2J_{n-2}.
\end{equation*}

\item For $a=2,\ b=1,\ p=1,\ q=2,$ it is obtained Jacobsthal-Lucas numbers:%
\begin{equation*}
j_{n}=j_{n-1}+2j_{n-2}.
\end{equation*}
\end{itemize}

Considering [12] (or [6]), one can clearly obtain the characteristic
equation of (\ref{1}) as the form $t^{2}-pt-q=0$ with the roots 
\begin{equation}
\alpha =\frac{p+\sqrt{p^{2}+4q}}{2}\ \ \text{and\ }\ \ \beta =\frac{p-\sqrt{%
p^{2}+4q}}{2}.  \label{1.2}
\end{equation}%
Hence the Binet formula 
\begin{equation}
W_{n}=W_{n}\left( {a,b\ ;\ p,q}\right) =A\alpha ^{n}+B\beta ^{n},
\label{1.21}
\end{equation}%
where $A=\frac{b-a\beta }{\alpha -\beta },~B=\frac{a\alpha -b}{\alpha -\beta 
},$ can be thought as a solution of the recursive equation in (\ref{1}).

The number sequences have been interested by the researchers for a long
time. Recently, there have been so many studies in the literature that
concern about subsequences of Horadam numbers such as Fibonacci, Lucas, Pell
and Jacobsthal numbers. They were widely used in many research areas as
Physics, Engineering, Architecture, Nature and Art (see [1-15]). For
example, in [7], Taskara et al. examined the properties of Lucas numbers
with binomial coefficients.

In [10], they also computed the sums of products of the terms of the Lucas
sequence $\left\{ V_{kn}\right\} .$ In addition in [11], the authors
established identities involving sums of products of binomial coefficients.

And, in [16], we obtained Horadam numbers with positive and negative indices
by using determinants of some special tridiagonal matrices.

In this study, we are mainly interested in some new properties of the
binomial sums of Horadam numbers.

\section{Main Results}

Let us first consider the following lemma which will be needed later in this
section. In fact, this lemma enables us to construct a relation between
Horadam numbers and generalized Lucas numbers by using their subscripts.

\begin{lemma}
\lbrack 10]For $n\geq 1,$ we have%
\begin{equation}
W_{ni+i}=V_{i}W_{ni}-\left( -q\right) ^{i}W_{ni-i}.  \label{03}
\end{equation}
\end{lemma}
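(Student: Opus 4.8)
The plan is to derive (\ref{03}) directly from the Binet formula (\ref{1.21}), combined with the analogous closed form for the generalized Lucas numbers. First I would observe that, because $V_n$ obeys the same recurrence (\ref{1.11}) with $V_0=2$ and $V_1=p=\alpha+\beta$, solving the resulting $2\times 2$ linear system for its Binet coefficients yields the particularly simple expression $V_i=\alpha^i+\beta^i$. I would also record the elementary fact that $\alpha\beta=-q$, since $\alpha$ and $\beta$ are the roots of $t^2-pt-q=0$; in particular $(-q)^i=\alpha^i\beta^i$.

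The heart of the proof is then a one-line expansion. Using $W_m=A\alpha^m+B\beta^m$ for $m=ni$ and $m=ni-i$, I would write
\[
V_iW_{ni}-(-q)^iW_{ni-i}=(\alpha^i+\beta^i)(A\alpha^{ni}+B\beta^{ni})-\alpha^i\beta^i(A\alpha^{ni-i}+B\beta^{ni-i}).
\]
Expanding the first product gives $A\alpha^{ni+i}+A\alpha^{ni}\beta^i+B\alpha^i\beta^{ni}+B\beta^{ni+i}$, whereas the subtracted term simplifies, after absorbing $\alpha^i\beta^i$ into the exponents, to $A\alpha^{ni}\beta^i+B\alpha^i\beta^{ni}$. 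The two mixed terms cancel exactly, and what remains is $A\alpha^{ni+i}+B\beta^{ni+i}=W_{ni+i}$, which is precisely (\ref{03}).

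I do not expect a genuine obstacle in this argument; the only delicate points are the identification $V_i=\alpha^i+\beta^i$ and the bookkeeping that rewrites $(-q)^i$ as $\alpha^i\beta^i$, which is what makes the cancellation transparent. For the Binet formulas to be available one should assume $p^2+4q\neq 0$ so that $\alpha\neq\beta$; the degenerate case $\alpha=\beta$ can be treated by a separate short computation or by continuity. An equivalent route, avoiding explicit coefficients, is to note that the subsequence $\{W_{ni}\}_{n\ge 0}$ satisfies the second-order recurrence with characteristic roots $\alpha^i$ and $\beta^i$, namely $W_{ni+i}=(\alpha^i+\beta^i)W_{ni}-\alpha^i\beta^iW_{ni-i}$, and then to substitute $\alpha^i+\beta^i=V_i$ and $\alpha^i\beta^i=(-q)^i$; this can in turn be justified by induction on $n$ using (\ref{1}).
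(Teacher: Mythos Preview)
Your argument is correct: the Binet expansion with $V_i=\alpha^i+\beta^i$ and $(-q)^i=\alpha^i\beta^i$ forces the mixed terms to cancel and leaves exactly $W_{ni+i}$. The caveat about $p^2+4q\neq 0$ is appropriate, and your alternative phrasing via the recurrence satisfied by the subsequence $\{W_{ni}\}$ is also sound.

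However, there is nothing to compare against in the paper itself. The lemma is not proved there; it is quoted from reference~[10] (the bracketed ``[10]'' in the statement is a citation, not part of the claim), and the authors use it as a black box to launch the induction in Theorem~2. So your write-up supplies a proof where the paper offers none. If you want to match the paper's treatment, a one-line citation suffices; if you want a self-contained manuscript, your Binet computation is the cleanest choice.
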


\begin{theorem}
For $n\geq 2,$ the following equalities are hold:%
\begin{equation*}
W_{ni+i}=V_{i}^{n-1}W_{2i}-\left( -q\right)
^{i}\sum_{j=1}^{n-1}V_{i}^{n-1-j}W_{ij}.
\end{equation*}
\end{theorem}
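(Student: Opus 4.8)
The plan is to prove the identity by induction on $n$, using the preceding Lemma as the only engine; no Binet formula is needed.

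\textbf{Base case.} For $n=2$ the claimed formula reads $W_{3i}=V_{i}W_{2i}-\left( -q\right) ^{i}W_{i}$, because $V_{i}^{n-1}=V_{i}$ and the sum $\sum_{j=1}^{1}V_{i}^{0}W_{ij}$ collapses to the single term $W_{i}$. This is exactly the Lemma specialised to $n=2$, where it says $W_{2i+i}=V_{i}W_{2i}-\left( -q\right) ^{i}W_{2i-i}$; so the base case requires nothing beyond reading off the Lemma.

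\textbf{Inductive step.} Assume the identity holds for some fixed $n\geq 2$. First I would apply the Lemma with $n$ replaced by $n+1$ (valid since $n+1\geq 1$) to get $W_{(n+1)i+i}=V_{i}W_{(n+1)i}-\left( -q\right) ^{i}W_{ni}$. The key observation is that $W_{(n+1)i}=W_{ni+i}$, so the inductive hypothesis applies to this term and yields $W_{(n+1)i}=V_{i}^{n-1}W_{2i}-\left( -q\right) ^{i}\sum_{j=1}^{n-1}V_{i}^{n-1-j}W_{ij}$. Substituting this in, distributing the factor $V_{i}$ through the bracket, and then recognising that the leftover summand $-\left( -q\right) ^{i}W_{ni}$ is precisely the $j=n$ term $-\left( -q\right) ^{i}V_{i}^{\,n-n}W_{in}$ of the target sum, one collects everything into $W_{(n+1)i+i}=V_{i}^{n}W_{2i}-\left( -q\right) ^{i}\sum_{j=1}^{n}V_{i}^{n-j}W_{ij}$, which is the formula at level $n+1$. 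This closes the induction.

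The argument is essentially bookkeeping, so there is no genuine obstacle; the only point demanding care is index matching — verifying that $W_{(n+1)i}=W_{ni+i}$ so the hypothesis can be reused, and checking that the stray term produced by the Lemma slots in as the new top term of the sum rather than perturbing the existing ones. An alternative, non-inductive route would be to set $a_{k}:=W_{ki}$, observe that the Lemma reads $a_{k+1}=V_{i}a_{k}-\left( -q\right) ^{i}a_{k-1}$, and unroll this recurrence down to $a_{2}$ while leaving the terms $a_{j}$ with $j<n$ uncollapsed; this produces the same telescoped sum, but the induction above is the shorter thing to write out.
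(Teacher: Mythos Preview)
Your proof is correct and follows essentially the same approach as the paper: induction on $n$, with the base case read off directly from the Lemma and the inductive step obtained by combining the Lemma with the inductive hypothesis. The only cosmetic difference is direction---the paper starts from the right-hand side at level $n+1$, peels off the top term, factors out $V_{i}$, and applies the hypothesis and then the Lemma, whereas you start from the Lemma, substitute the hypothesis, and absorb the stray term into the sum; these are the same computation run in reverse.
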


\begin{proof}
Let us show this by induction, for $n=2,$ we can write 
\begin{equation*}
W_{3i}=V_{i}W_{2i}-\left( -q\right) ^{i}W_{i},
\end{equation*}%
which coincides with equation (\ref{03}). Now, assume that, it is true for
all positive integers $m$, i.e. 
\begin{equation}
W_{mi+i}=V_{i}^{m-1}W_{2i}-\left( -q\right)
^{i}\sum_{j=1}^{m-1}V_{i}^{m-j-1}W_{ij}.  \label{2}
\end{equation}%
Then, we need to show that above equality holds for $n=m+1,$ that is, 
\begin{equation}
W_{(m+1)i+1}=V_{i}^{m}W_{2i}-\left( -q\right)
^{i}\sum_{j=1}^{m}V_{i}^{m-j}W_{ij}.  \label{2.1}
\end{equation}%
By considering the right hand side of Equation (\ref{2.1}), we can expand
the summation as 
\begin{eqnarray*}
V_{i}^{m}W_{2i}-\left( -q\right) ^{i}\sum_{j=1}^{m}V_{i}^{m-j}W_{ij}
&=&V_{i}^{m}W_{2i}-\left( -q\right)
^{i}\sum_{j=1}^{m-1}V_{i}^{m-j}W_{ij}-\left( -q\right) ^{i}W_{mi} \\
&=&V_{i}\left( V_{i}^{m-1}W_{2i}-\left( -q\right)
^{i}\sum_{j=1}^{m-1}V_{i}^{m-j-1}W_{ij}\right) -\left( -q\right) ^{i}W_{mi}.
\end{eqnarray*}%
Then, using Equation (\ref{2}), we have%
\begin{equation*}
V_{i}^{m}W_{2i}-\left( -q\right)
^{i}\sum_{j=1}^{m}V_{i}^{m-j}W_{ij}=V_{i}W_{mi+i}-\left( -q\right) ^{i}W_{mi}
\end{equation*}%
Finally, by considering (\ref{03}), we obtain%
\begin{equation*}
V_{i}^{m}W_{2i}-\left( -q\right)
^{i}\sum_{j=1}^{m}V_{i}^{m-j}W_{ij}=W_{\left( m+1\right) i+i}
\end{equation*}%
which ends up the induction.
\end{proof}

Choosing some suitable values on $a,\ b,\ p$ and $q$, one can also obtain
the sums of the well known Fibonacci, Lucas, Pell, Jacobsthal numbers, etc.
in terms of the sum in Theorem 2.

\begin{corollary}
In Theorem 2, for special choices of $a,\ b,\ p$ and $q$, the following
result can be obtained for well-known number sequences in literature.
\end{corollary}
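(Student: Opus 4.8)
The proof will be a direct instantiation of Theorem 2, so the plan is essentially bookkeeping rather than a new argument. First I would recall that the Horadam recurrence $W_n(a,b;p,q)$ specializes, under the parameter choices listed in the introduction, to the classical sequences: $(a,b,p,q)=(0,1,1,1)$ gives $F_n$, $(2,1,1,1)$ gives $L_n$, $(0,1,2,1)$ gives $P_n$, $(0,1,1,2)$ gives $J_n$, and so on. In parallel, the generalized Lucas sequence $V_n(p,q)$ specializes to $L_n$ when $(p,q)=(1,1)$, to the Pell--Lucas numbers $Q_n$ when $(p,q)=(2,1)$, and to the Jacobsthal--Lucas numbers $j_n$ when $(p,q)=(1,2)$. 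The key point is that $V_i$ appearing in Theorem 2 depends only on $p,q$, not on the initial data $a,b$, so each specialization of $W$ is paired with the corresponding specialization of $V$ governed by the same $(p,q)$.

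Next I would substitute these values into the identity of Theorem 2,
\begin{equation*}
W_{ni+i}=V_{i}^{n-1}W_{2i}-\left( -q\right)^{i}\sum_{j=1}^{n-1}V_{i}^{n-1-j}W_{ij},
\end{equation*}
and simplify the sign factor $(-q)^i$ in each case: for the Fibonacci/Lucas and Pell/Pell--Lucas families $q=1$ so $(-q)^i=(-1)^i$, while for the Jacobsthal/Jacobsthal--Lucas families $q=2$ so $(-q)^i=(-2)^i$. This yields, for instance,
\begin{equation*}
F_{ni+i}=L_{i}^{\,n-1}F_{2i}-(-1)^{i}\sum_{j=1}^{n-1}L_{i}^{\,n-1-j}F_{ij},
\qquad
L_{ni+i}=L_{i}^{\,n-1}L_{2i}-(-1)^{i}\sum_{j=1}^{n-1}L_{i}^{\,n-1-j}L_{ij},
\end{equation*}
and analogously $P_{ni+i}=Q_i^{\,n-1}P_{2i}-(-1)^i\sum_{j=1}^{n-1}Q_i^{\,n-1-j}P_{ij}$, $Q_{ni+i}=Q_i^{\,n-1}Q_{2i}-(-1)^i\sum_{j=1}^{n-1}Q_i^{\,n-1-j}Q_{ij}$, $J_{ni+i}=j_i^{\,n-1}J_{2i}-(-2)^i\sum_{j=1}^{n-1}j_i^{\,n-1-j}J_{ij}$, and $j_{ni+i}=j_i^{\,n-1}j_{2i}-(-2)^i\sum_{j=1}^{n-1}j_i^{\,n-1-j}j_{ij}$. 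One should also record the classical Fibonacci/Lucas cases that come from $(p,q)=(1,1)$ as the headline instances. Each of these is valid for $n\geq 2$, inheriting the hypothesis of Theorem 2.

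There is no real obstacle here: the only thing to be careful about is consistency of the pairing — making sure that whenever $W$ is taken to be a "generalized Fibonacci"–type sequence (like $F$, $P$, $J$), the $V$ in the formula is replaced by the Lucas-type companion with the \emph{same} $(p,q)$ (namely $L$, $Q$, $j$ respectively), since mismatching these would give a false identity. A secondary point of care is the sign/scalar bookkeeping in $(-q)^i$, which differs between the $q=1$ and $q=2$ families. Once the table of substitutions is laid out, the corollary follows immediately from Theorem 2 with no further computation, so the "proof" is simply the remark that each displayed identity is the $a,b,p,q$-specialization of Theorem 2 together with the identification of the relevant companion sequence.
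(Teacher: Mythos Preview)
Your proposal is correct and is exactly the approach the paper takes: the corollary is obtained by direct substitution of the listed parameter choices $(a,b,p,q)$ into the identity of Theorem~2, with the companion $V_i$ determined by the same $(p,q)$ and the factor $(-q)^i$ simplified accordingly. The paper presents the specializations as a bare list without further argument, so your explicit remarks about the $W$--$V$ pairing and the sign bookkeeping already go slightly beyond what the paper records.
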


\begin{itemize}
\item For $a=0,\ b=1,$ it is obtained generalized Fibonacci numbers:%
\begin{equation*}
U_{ni+i}=V_{i}^{n-1}U_{2i}-\left( -q\right)
^{i}\sum_{j=1}^{n-1}V_{i}^{n-1-j}U_{ij}.
\end{equation*}

\item For $a=2,\ b=p,$ it is obtained generalized Lucas numbers:%
\begin{equation*}
V_{ni+i}=V_{i}^{n-1}V_{2i}-\left( -q\right)
^{i}\sum_{j=1}^{n-1}V_{i}^{n-1-j}V_{ij}.
\end{equation*}

\item For $a=0,\ b=1,\ p=1,\ q=1,$ it is obtained classical Fibonacci
numbers:%
\begin{equation*}
F_{ni+i}=L_{i}^{n-1}F_{2i}-\left( -1\right)
^{i}\sum_{j=1}^{n-1}L_{i}^{n-1-j}F_{ij}.
\end{equation*}

\item For $a=2,\ b=1,\ p=1,\ q=1,$ it is obtained classical Lucas numbers:%
\begin{equation*}
L_{ni+i}=L_{i}^{n-1}L_{2i}-\left( -1\right)
^{i}\sum_{j=1}^{n-1}L_{i}^{n-1-j}L_{ij}.
\end{equation*}

\item For $a=0,\ b=1,\ p=2,\ q=1,$ it is obtained Pell numbers:%
\begin{equation*}
P_{ni+i}=Q_{i}^{n-1}P_{2i}-\left( -1\right)
^{i}\sum_{j=1}^{n-1}Q_{i}^{n-1-j}P_{ij}.
\end{equation*}

\item For $a=2,\ b=2,\ p=2,\ q=1,$ it is obtained Pell-Lucas numbers:%
\begin{equation*}
Q_{ni+i}=Q_{i}^{n-1}Q_{2i}-\left( -1\right)
^{i}\sum_{j=1}^{n-1}Q_{i}^{n-1-j}Q_{ij}.
\end{equation*}

\item For $a=0,\ b=1,\ p=1,\ q=2,$ it is obtained Jacobsthal numbers:%
\begin{equation*}
J_{ni+i}=j_{i}^{n-1}J_{2i}-\left( -2\right)
^{i}\sum_{j=1}^{n-1}j_{i}^{n-1-j}J_{ij}.
\end{equation*}

\item For $a=2,\ b=1,\ p=1,\ q=2,$ it is obtained Jacobsthal-Lucas numbers:%
\begin{equation*}
j_{ni+i}=j_{i}^{n-1}j_{2i}-\left( -2\right)
^{i}\sum_{j=1}^{n-1}j_{i}^{n-1-j}j_{ij}.
\end{equation*}

\item By choosing other suitable values on $a,\ b,\ p$ and $q$, almost all
other special numbers can also be obtained in terms of the sum in Theorem 2.
\end{itemize}

\qquad Now, we will show the relation between Horadam numbers and
generalized Lucas numbers using binomial sums as follows.

\begin{theorem}
For $n\geq 2,$ the following equalities are satisfied:
\end{theorem}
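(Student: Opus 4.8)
The plan is to argue directly from the Binet representation \eqref{1.21} rather than by the induction used for the previous results. The single fact that drives the proof is that $\alpha^{i}$ and $\beta^{i}$ are exactly the two roots of the quadratic $t^{2}-V_{i}t+(-q)^{i}=0$: indeed $\alpha^{i}+\beta^{i}=V_{i}$ is the Binet formula for the generalized Lucas sequence \eqref{1.11}, while $\alpha^{i}\beta^{i}=(\alpha\beta)^{i}=(-q)^{i}$ because $\alpha\beta=-q$ by \eqref{1.2}. In other words, passing from step size $i$ to step size $2i$ amounts to replacing $(p,q)$ by $(V_{i},-(-q)^{i})$ in the characteristic equation, so in particular $\alpha^{2i}=V_{i}\,\alpha^{i}-(-q)^{i}$ and the same identity holds with $\beta$ in place of $\alpha$.

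From here I would raise this relation to the $n$-th power and apply the ordinary binomial theorem: $\alpha^{2ni}=\bigl(V_{i}\alpha^{i}-(-q)^{i}\bigr)^{n}=\sum_{j=0}^{n}\binom{n}{j}\,(-1)^{\,n-j}(-q)^{\,i(n-j)}V_{i}^{\,j}\,\alpha^{ij}$, together with the companion expansion obtained by writing $\beta$ for $\alpha$. Multiplying the first expansion by $A$, the second by $B$, and adding, the coefficient of $\binom{n}{j}(-1)^{n-j}(-q)^{i(n-j)}V_{i}^{j}$ collapses to $A\alpha^{ij}+B\beta^{ij}=W_{ij}$ by \eqref{1.21}, while the left-hand side is $A\alpha^{2ni}+B\beta^{2ni}=W_{2ni}$; this gives the first equality (for $W_{2ni}$). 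The second equality (for $W_{(2n+1)i}$) then follows by multiplying the two power expansions by one extra factor, $\alpha^{i}$ on the $A$-side and $\beta^{i}$ on the $B$-side, before combining, which shifts every $W_{ij}$ on the right to $W_{ij+i}$. Specializing $a,b,p,q$ as in Corollary 3, with $V_{i}$ replaced by the corresponding Lucas-type term, reproduces all the named sequences.

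The only steps needing any care are clerical: tracking the sign-and-power factor $(-1)^{n-j}(-q)^{i(n-j)}$ coming from the constant $-(-q)^{i}$, and checking that the endpoint terms $j=0$ and $j=n$ fall inside the stated summation range. If an inductive proof is preferred instead, the main obstacle is the re-indexing: one feeds the relation \eqref{03} in the form $W_{(n+1)i}=V_{i}W_{ni}-(-q)^{i}W_{(n-1)i}$ into the inductive hypotheses at $n$ and at $n-1$, and then merges the two resulting binomial sums using Pascal's rule $\binom{n}{j}=\binom{n-1}{j}+\binom{n-1}{j-1}$ — the delicate points being the shift of the summation variable in one of the two sums and the separate handling of the new top term. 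In either approach there is no genuine analytic difficulty; the entire content is the characteristic-equation observation of the first paragraph.
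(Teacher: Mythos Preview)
Your proposal proves the wrong identity. The two cases in the theorem split on the parity of $i$, not on whether the left side is $W_{2ni}$ versus $W_{(2n+1)i}$; in both cases the left side is $W_{(n+1)i}$, and the right side involves only $W_{i}$ and $W_{0}=a$, with \emph{Fibonacci-type} binomials $\binom{n-j}{j}$ rather than the ordinary $\binom{n}{j}$. Expanding $\alpha^{2ni}=\bigl(V_{i}\alpha^{i}-(-q)^{i}\bigr)^{n}$ by the binomial theorem and combining with the $\beta$-version gives $W_{2ni}=\sum_{j=0}^{n}\binom{n}{j}(-1)^{n-j}(-q)^{i(n-j)}V_{i}^{\,j}W_{ij}$, a true but different formula in which every $W_{ij}$ still appears on the right. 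That is not the statement to be proved, and shifting by one extra factor of $\alpha^{i}$ only changes $W_{ij}$ to $W_{ij+i}$; it does not collapse the sum down to $W_{i}$ and $a$.

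Your opening observation is exactly the right one, but the correct continuation is to reduce $(\alpha^{i})^{n+1}$ modulo the quadratic $t^{2}-V_{i}t+(-q)^{i}$ rather than to raise the relation to the $n$-th power. That reduction gives $(\alpha^{i})^{n+1}=\widetilde U_{n+1}\,\alpha^{i}-(-q)^{i}\widetilde U_{n}$, where $\widetilde U_{k}$ is the generalized Fibonacci sequence for the parameters $\bigl(V_{i},\,-(-q)^{i}\bigr)$; combining with the $\beta$-version yields $W_{(n+1)i}=\widetilde U_{n+1}W_{i}-(-q)^{i}\widetilde U_{n}\,a$. The standard closed form $\widetilde U_{n+1}=\sum_{j}\binom{n-j}{j}V_{i}^{\,n-2j}\bigl(-(-q)^{i}\bigr)^{j}$ then delivers the two displayed cases, since $-(-q)^{i}$ equals $q^{i}$ or $-q^{i}$ according as $i$ is odd or even. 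The paper itself does not use Binet at all: it starts from the sum of Theorem~2, pairs consecutive terms, and recursively substitutes (\ref{03}) to build up the $\binom{n-j}{j}$ coefficients, handling the parities of $n$ separately.
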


\begin{equation*}
W_{ni+i}=\left\{ 
\begin{array}{c}
\sum\limits_{j=0}^{\left\lfloor \frac{n}{2}\right\rfloor }\binom{n-j}{j}%
V_{i}^{n-2j}q^{ij}W_{i}+q^{i}a\sum\limits_{j=0}^{\left\lfloor \frac{n-1}{2}%
\right\rfloor }\binom{n-j-1}{j}V_{i}^{n-2j-1}q^{ij},\ \ \ \ \ \ \ \ \ \ \ \
\ \ \ \ i\text{ is odd} \\ 
\sum\limits_{j=0}^{\left\lfloor \frac{n}{2}\right\rfloor }\binom{n-j}{j}%
\left( -1\right)
^{j}V_{i}^{n-2j}q^{ij}W_{i}-q^{i}a\sum\limits_{j=0}^{\left\lfloor \frac{n-1}{%
2}\right\rfloor }\binom{n-j-1}{j}\left( -1\right) ^{j}V_{i}^{n-2j-1}q^{ij},\
\ i\text{ is even.}%
\end{array}%
\right.
\end{equation*}

\begin{proof}
There are two cases of subscript $i$.

\textbf{Case 1}: Let be $i$ is odd. Then, by Theorem 2, we can write 
\begin{eqnarray*}
W_{ni+i} &=&V_{i}^{n-1}W_{2i}+q^{i}\sum_{j=1}^{n-1}V_{i}^{n-1-j}W_{ij} \\
&=&V_{i}^{n-1}W_{2i}+q^{i}V_{i}^{n-2}W_{i}+q^{i}V_{i}^{n-3}W_{2i}+\cdots
+q^{i}W_{\left( n-1\right) i}.
\end{eqnarray*}%
We must note that the proof should be investigated for both cases of $n$.

If $n$ is odd, then we have%
\begin{eqnarray}
W_{ni+i} &=&V_{i}^{n-2}\left( V_{i}W_{2i}+q^{i}W_{i}\right)
+q^{i}V_{i}^{n-4}\left( V_{i}W_{2i}+W_{3i}\right)  \label{04} \\
&&+\cdots +q^{i}V_{i}\left( V_{i}W_{\left( n-3\right) i}+W_{\left(
n-2\right) i}\right) +q^{i}W_{\left( n-1\right) i}.  \notag
\end{eqnarray}%
Hence, it is given the binomial summation, when the recursive substitutions
equation (\ref{04}) by using (\ref{03}), 
\begin{equation}
W_{ni+i}=\sum_{j=0}^{\frac{n-1}{2}}\dbinom{n-j}{j}%
V_{i}^{n-2j}q^{ij}W_{i}+q^{i}a\sum\limits_{j=0}^{\frac{n-1}{2}}\binom{n-j-1}{%
j}V_{i}^{n-2j-1}q^{ij}.  \label{2.11}
\end{equation}

If $n$ is even, then similar approach can be applied to obtain%
\begin{eqnarray*}
W_{ni+i} &=&V_{i}^{n-2}\left( V_{i}W_{2i}+q^{i}W_{i}\right)
+q^{i}V_{i}^{n-4}\left( V_{i}W_{2i}+W_{3i}\right) \\
&&+\cdots +q^{i}V_{i}^{0}\left( V_{i}W_{\left( n-2\right) i}+W_{\left(
n-1\right) i}\right) .
\end{eqnarray*}%
and%
\begin{equation}
W_{ni+i}=\sum_{j=0}^{\frac{n}{2}}\dbinom{n-j}{j}%
V_{i}^{n-2j}q^{ij}W_{i}+q^{i}a\sum\limits_{j=0}^{\frac{n-2}{2}}\binom{n-j-1}{%
j}V_{i}^{n-2j-1}q^{ij}.  \label{2.12}
\end{equation}%
For the final step, we combine (\ref{2.11}) and (\ref{2.12}) to see the
equality%
\begin{equation*}
W_{ni+i}=\sum_{j=0}^{\left\lfloor \frac{n}{2}\right\rfloor }\dbinom{n-j}{j}%
V_{i}^{n-2j}q^{ij}W_{i}+q^{i}a\sum\limits_{j=0}^{\left\lfloor \frac{n-1}{2}%
\right\rfloor }\binom{n-j-1}{j}V_{i}^{n-2j-1}q^{ij},
\end{equation*}%
as required. Now, for the next case, consider

\textbf{Case 2}: Let be $i$ is even. Then, by Theorem 2, we know%
\begin{eqnarray*}
W_{ni+i} &=&V_{i}^{n-1}W_{2i}-q^{i}\sum_{j=1}^{n-1}V_{i}^{n-1-j}W_{ij} \\
&=&V_{i}^{n-1}W_{2i}-q^{i}V_{i}^{n-2}W_{i}-q^{i}V_{i}^{n-3}W_{2i}-\cdots
-q^{i}W_{\left( n-1\right) i}.
\end{eqnarray*}%
and therefore, we write%
\begin{equation}
W_{ni+i}=\sum_{j=0}^{\frac{n-1}{2}}\dbinom{n-j}{j}\left( -1\right)
^{j}V_{i}^{n-2j}q^{ij}W_{i}-q^{i}a\sum\limits_{j=0}^{\frac{n-1}{2}}\binom{%
n-j-1}{j}\left( -1\right) ^{j}V_{i}^{n-2j-1}q^{ij}  \label{2.13}
\end{equation}%
if $n$ is odd. And we get%
\begin{equation}
W_{ni+i}=\sum_{j=0}^{\frac{n}{2}}\dbinom{n-j}{j}\left( -1\right)
^{j}V_{i}^{n-2j}q^{ij}W_{i}-q^{i}a\sum\limits_{j=0}^{\frac{n-2}{2}}\binom{%
n-j-1}{j}\left( -1\right) ^{j}V_{i}^{n-2j-1}q^{ij}  \label{2.14}
\end{equation}%
if $n$ is even. Thus, by combining (\ref{2.13}) and (\ref{2.14}), we obtain%
\begin{equation*}
W_{ni+i}=\sum_{j=0}^{\left\lfloor \frac{n}{2}\right\rfloor }\dbinom{n-j}{j}%
\left( -1\right)
^{j}V_{i}^{n-2j}q^{ij}W_{i}-q^{i}a\sum\limits_{j=0}^{\left\lfloor \frac{n-1}{%
2}\right\rfloor }\binom{n-j-1}{j}\left( -1\right) ^{j}V_{i}^{n-2j-1}q^{ij}.
\end{equation*}%
Hence the result follows.
\end{proof}

Choosing some suitable values on $i,\ a,\ b,\ p$ and $q$, one can also
obtain the binomial sums of the well known Fibonacci, Lucas, Pell,
Jacobsthal numbers, etc. in terms of binomial sums in Theorem 4.

\begin{corollary}
In Theorem 4, for special choices of $i,\ a,\ b,\ p,\ q$, the following
result can be obtained.
\end{corollary}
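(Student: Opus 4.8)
The plan is to obtain this corollary in exactly the way Corollary 3 was obtained from Theorem 2: by substituting the appropriate quadruple $(a,b;p,q)$ into the two-case formula of Theorem 4. First I would fix, for each named sequence, the defining parameters already recorded in the Introduction: $(0,1;1,1)$ for the Fibonacci numbers $F_n$, $(2,1;1,1)$ for the Lucas numbers $L_n$, $(0,1;2,1)$ for the Pell numbers $P_n$, $(2,2;2,1)$ for the Pell--Lucas numbers $Q_n$, $(0,1;1,2)$ for the Jacobsthal numbers $J_n$, $(2,1;1,2)$ for the Jacobsthal--Lucas numbers $j_n$, together with $(0,1;p,q)$ for the generalized Fibonacci $U_n$ and $(2,p;p,q)$ for the generalized Lucas $V_n$. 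The one structural fact to record before substituting is that the companion sequence $V_n$ appearing in Theorem 4 is precisely the generalized Lucas sequence of~(\ref{1.11}) for the chosen $p,q$; hence under each specialization the powers $V_i^{\,n-2j}$ and $V_i^{\,n-2j-1}$ turn into powers of $L_i$ (when $p=q=1$), of $Q_i$ (when $p=2,\,q=1$), or of $j_i$ (when $p=1,\,q=2$), while in the generalized cases they stay $V_i$.

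Two observations then make every item fall out mechanically. First, the term $W_i$ multiplying the leading binomial sum specializes to the $i$-th term of the sequence under study ($F_i,L_i,P_i,Q_i,J_i,j_i$, or $U_i,V_i$), whereas the scalar $a$ in front of the second binomial sum equals $0$ for every Fibonacci-type member ($U,F,P,J$) and equals $2$ for every Lucas-type member ($V,L,Q,j$); consequently the second summation disappears entirely in the Fibonacci-type cases, leaving a single binomial sum, and survives with coefficient $q^{i}a$ (that is, $2q^{i}$) in the Lucas-type cases. Second, the split according to the parity of $i$ in Theorem 4 depends only on whether $(-q)^{i}$ equals $q^{i}$ or $-q^{i}$, i.e.\ only on the parity of $i$ and not on the particular sequence; so it is inherited verbatim, and one simply copies the ``$i$ odd'' and ``$i$ even'' branches with $V_i$, $W_i$, $a$, $q$ replaced by their specialized values.

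Assembling these, each bullet of the corollary is a direct substitution into Theorem 4, so I do not expect a genuine obstacle; the only points needing (minor) care are bookkeeping. One must check that the factor $q^{ij}$ becomes $2^{ij}$ and the external factor $q^{i}a$ becomes $2^{i+1}$ in the Jacobsthal--Lucas case, that $q^{i}a=0$ correctly kills the second sum for the Jacobsthal, Pell and (classical or generalized) Fibonacci members, and that the summation bounds $\left\lfloor \frac{n}{2}\right\rfloor$ and $\left\lfloor \frac{n-1}{2}\right\rfloor$ are transcribed unchanged. In short, the genuine work was already spent in proving Theorem 4; this corollary is just its catalogue of instances, and the ``proof'' amounts to verifying that the hypotheses of Theorem 4 hold for each listed choice of $a,b,p,q$ and that $V_n$ specializes to the relevant companion sequence.
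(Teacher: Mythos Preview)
Your approach is correct and is exactly what the paper does: the corollary is obtained by direct substitution of the listed parameter values into the formula of Theorem~4, with no additional argument supplied or needed. The only cosmetic difference is that the paper does not keep $i$ generic but rather displays the fully evaluated formulas for the concrete choices $i=1$ (odd branch) and $i=2$ (even branch), so that, for instance, $V_1=p$, $V_2=p^2+2q$, $W_1$, $W_2$ and $q^{ij}$ are all reduced to explicit integers in each line; your substitutions would of course reproduce these after one more arithmetic step.
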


\begin{itemize}
\item For $i=1,$

\begin{itemize}
\item[*] For $a=0$ and $b,p,q=1$, Fibonacci number 
\begin{equation*}
F_{n+1}=\sum\limits_{j=0}^{\left\lfloor \frac{n}{2}\right\rfloor }\binom{n-j%
}{j},
\end{equation*}

\item[*] For $a=2$ and $b,p,q=1$, Lucas number%
\begin{equation*}
L_{n+1}=\sum\limits_{j=0}^{\left\lfloor \frac{n}{2}\right\rfloor }\binom{n-j%
}{j}+2\sum\limits_{j=0}^{\left\lfloor \frac{n-1}{2}\right\rfloor }\binom{%
n-j-1}{j}.
\end{equation*}

\item[*] For $a=0$,$\ b=1,\ p=2$ and $q=1$, Pell number 
\begin{equation*}
P_{n+1}=\sum\limits_{j=0}^{\left\lfloor \frac{n}{2}\right\rfloor }\binom{n-j%
}{j}2^{n-2j}.
\end{equation*}

\item[*] For $a=0$,$\ b=1,\ p=1$ and $q=2$, Jacobsthal number%
\begin{equation*}
J_{n+1}=\sum\limits_{j=0}^{\left\lfloor \frac{n}{2}\right\rfloor }\binom{n-j%
}{j}2^{j}.
\end{equation*}
\end{itemize}

\item For $i=2,$

\begin{itemize}
\item[*] For $a=0$ and $b,p,q=1$, Fibonacci number 
\begin{equation*}
F_{2n+2}=\sum\limits_{j=0}^{\left\lfloor \frac{n}{2}\right\rfloor }\binom{n-j%
}{j}\left( -1\right) ^{j}3^{n-2j}.
\end{equation*}

\item[*] For $a=2$ and $b,p,q=1$, Lucas number%
\begin{equation*}
L_{2n+2}=\sum\limits_{j=0}^{\left\lfloor \frac{n}{2}\right\rfloor }\binom{n-j%
}{j}\left( -1\right) ^{j}3^{n+1-2j}-2\sum\limits_{j=0}^{\left\lfloor \frac{%
n-1}{2}\right\rfloor }\binom{n-j-1}{j}\left( -1\right) ^{j}3^{n-1-2j}.
\end{equation*}

\item[*] For $a=0$,$\ b=1,\ p=2$ and $q=1$, Pell number 
\begin{equation*}
P_{2n+2}=2\sum\limits_{j=0}^{\left\lfloor \frac{n}{2}\right\rfloor }\binom{%
n-j}{j}\left( -1\right) ^{j}6^{n-2j}.
\end{equation*}

\item[*] For $a=0$,$\ b=1,\ p=1\ $and $q=2$, Jacobsthal number 
\begin{equation*}
J_{2n+2}=\sum\limits_{j=0}^{\left\lfloor \frac{n}{2}\right\rfloor }\binom{n-j%
}{j}\left( -1\right) ^{j}2^{j}.
\end{equation*}
\end{itemize}

\item By choosing other suitable values on $i,\ a,\ b,\ p$ and $q$, almost
all other special numbers can also be obtained in terms of the binomial sum
in Theorem 4.
\end{itemize}

\end{document}